 \theoremstyle{plain}
 \newtheorem{thm}{Theorem}
 \newtheorem{prop}[thm]{Proposition}
 \newtheorem{lem}[thm]{Lemma}
\newcommand{\bbQ}{\ensuremath{\mathbb{Q}}}
\newcommand{\bbC}{\ensuremath{\mathbb{C}}}
\newcommand{\bbP}{\ensuremath{\mathbb{P}}}
\newcommand{\PGL}{\ensuremath{\operatorname{PGL}}}
\newcommand{\GL}{\ensuremath{\operatorname{GL}}}
\newcommand{\ed}{\ensuremath{\operatorname{ed}}}
\newcommand{\rank}{\ensuremath{\operatorname{rank}}}
\begin{document}
 
\title{Essential Dimensions of $A_7$ and $S_7$}
\author{Alexander Duncan \thanks{The author is partially supported by an NSERC Canada Graduate Scholarship.}}

\maketitle

\abstract{
We show that Y. Prokhorov's ``Simple Finite Subgroups of the {C}remona Group of Rank 3'' implies that, over any field of characteristic $0$, the essential dimensions of the alternating group, $A_7$, and the symmetric group, $S_7$, are $4$.
}

\subsection*{Introduction}

Let $k$ be a field of characteristic $0$.  Throughout this note we assume that all varieties, actions and maps are defined over $k$.

Let $G$ be a finite group.  A \emph{compression} is a dominant rational $G$-equivariant map of faithful $G$-varieties.   Let $V$ be a faithful linear representation of $G$ viewed as a $G$-variety.  We define the \emph{essential dimension of $G$}, denoted $\ed_k(G)$, to be the minimal value of $\dim(X)$, where $X$ is taken from the set of all faithful $G$-varieties sitting under a compression $V \dasharrow X$.  From \cite[Theorem 3.1]{BuhRei1997EDFG}, we see that the essential dimension depends only on $k$ and $G$ --- the choice of linear representation $V$ does not matter.

The purpose of this note is to show that the essential dimension of the alternating group $A_7$ and the symmetric group $S_7$ can be computed using the recent work of Prokhorov \cite{Pro2009SFSCGR} on the classification of rationally connected threefolds with faithful actions of non-abelian simple groups.  Our main result is the following:

\begin{thm}\label{thm:main}
$\ed_k(A_7)=\ed_k(S_7)=4$.
\end{thm}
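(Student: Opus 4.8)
The plan is to reduce everything to the single inequality $\ed_{\bbC}(A_7)\ge 4$. For the upper bound I would use the classical estimate $\ed_k(S_n)\le n-3$ for $n\ge 5$, obtained by reducing a general degree-$n$ polynomial via Tschirnhaus transformations; this gives $\ed_k(S_7)\le 4$. Essential dimension is monotone under passage to subgroups: a faithful linear representation of $S_7$ restricts to a faithful one of $A_7$, and a compression of faithful $S_7$-varieties is in particular a compression of faithful $A_7$-varieties, so $\ed_k(A_7)\le\ed_k(S_7)$. Finally, base change along $k\hookrightarrow\bar{k}$ only decreases essential dimension, and over an algebraically closed field of characteristic $0$ it is independent of the field by a Lefschetz-principle argument, so $\ed_k(A_7)\ge\ed_{\bbC}(A_7)$. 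Thus $\ed_{\bbC}(A_7)\ge 4$ would yield $4\le\ed_{\bbC}(A_7)\le\ed_k(A_7)\le\ed_k(S_7)\le 4$ and hence the theorem.

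To prove $\ed_{\bbC}(A_7)\ge 4$ I would argue by contradiction, assuming a compression $V\dashrightarrow X_0$ with $\dim X_0=3$. Using $A_7$-equivariant resolution and compactification, I may replace $X_0$ by a smooth projective model $X$ in its $A_7$-birational class; the existence of a compression from $V$ (\emph{versality}) is a birational invariant of the target, so $X$ is again versal. Since $X$ is dominated by the rational variety $V$ it is unirational, hence rationally connected. At this point I would invoke Prokhorov's classification of faithful actions of finite non-abelian simple groups on rationally connected threefolds: it identifies $(X,A_7)$, up to $A_7$-equivariant birational equivalence, with $\bbP^3=\bbP(W)$, where $W$ is a $4$-dimensional representation of the double cover $2{\cdot}A_7$ and $A_7$ acts through $2{\cdot}A_7\hookrightarrow\GL(W)$, $A_7\hookrightarrow\PGL(W)$.

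It then remains to show that this projective-linear action on $\bbP(W)$ is not versal, contradicting the compression. The key observation is that for any $A_7$-torsor $T$ over a field $K\supseteq\bbC$ and any compression $V\dashrightarrow X$, the twisted variety ${}^{T}X$ has a $K$-point: indeed ${}^{T}V$ carries the rational point coming from the $A_7$-fixed origin $0\in V$, these points are dense, and the twisted dominant map ${}^{T}V\dashrightarrow{}^{T}X$ carries one of them to a $K$-point of ${}^{T}X$. But ${}^{T}\bbP(W)$ is a Severi--Brauer variety whose class is the image of $[T]$ under the connecting map $\partial\colon H^1(K,A_7)\to H^2(K,\mathbb{Z}/2)=\operatorname{Br}(K)[2]$ attached to the non-split central extension $1\to\mathbb{Z}/2\to 2{\cdot}A_7\to A_7\to 1$. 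Because this extension is non-split, $\partial$ is a nonzero cohomological invariant, so there is a torsor $T$ with $\partial([T])\ne 0$; for that $T$ the Severi--Brauer variety ${}^{T}\bbP(W)$ has no $K$-point, contradicting the previous sentence. Hence $\bbP(W)$ is not versal and $\ed_{\bbC}(A_7)\ge 4$.

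The main obstacle is the reduction itself: I am relying on Prokhorov's theorem to guarantee that every rationally connected threefold with a faithful $A_7$-action is $A_7$-birational to the single model $\bbP(W)$, so that one Severi--Brauer computation disposes of all cases; if the classification instead produces several models, each must be checked to carry the same Brauer obstruction. The second delicate point, though standard, is confirming that the class of $2{\cdot}A_7$ in $H^2(A_7,\mathbb{Z}/2)$ is nonzero and is detected on some torsor --- this is where the structure of the Schur multiplier of $A_7$ and its $4$-dimensional spin representation are essential.
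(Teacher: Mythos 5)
Your outer reduction (upper bound $n-3$, monotonicity under subgroups, descent to $\bbC$) matches the paper, and your twisting argument --- a versal target acquires a $K$-point after twisting by any $A_7$-torsor, since the twist of the linear variety $V$ is an affine space with dense $K$-points --- is a sound cohomological counterpart of the paper's fixed-point method (Proposition~\ref{prop:goingdown}). But your invocation of Prokhorov is wrong on its face: Theorem~\ref{thm:Prokhorov} does \emph{not} produce the single model $\bbP^3$. It gives two $A_7$-birational models: (i) the threefold in $\bbP^6$ cut out by the symmetric polynomials of degrees $1,2,3$ (the locus $\sum x_i=\sum x_i^2=\sum x_i^3=0$) with the standard permutation action, and (ii) $\bbP^3$ with a linear action. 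You flagged this risk yourself, and it is realized: your Severi--Brauer computation says nothing about case (i), which is not a twisted form of projective space. The paper disposes of it by a fixed-point obstruction: the abelian subgroup $A=\langle(1\,2\,3),(4\,5\,6)\rangle$ has only finitely many fixed points on $\bbP^6$ (the eigenlines of a lift of $A$ to $\bbC^7$), and an explicit check --- solving $p_1=p_2=0$ forces $\lambda_1=-1\pm\sqrt{-7}$, $\lambda_2=-1\mp\sqrt{-7}$, $\lambda_3=6$, and then $p_3\neq 0$, likewise for the remaining eigenlines --- shows none of them lies on the threefold, contradicting Proposition~\ref{prop:goingdown}. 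Without some argument for case (i), your proof is incomplete.

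There is a second genuine gap, in case (ii) itself: the step ``the extension $1\to\mathbb{Z}/2\to 2{\cdot}A_7\to A_7\to 1$ is nonsplit, hence $\partial$ is nonzero on some torsor'' is a false general principle over fields containing $\bbC$, because nonsplit central extensions can be \emph{negligible} in Serre's sense. For instance $0\to\mathbb{Z}/2\to\mathbb{Z}/8\to\mathbb{Z}/4\to 0$ is nonsplit, yet for every $K\supseteq\bbC$ Kummer theory shows $H^1(K,\mathbb{Z}/8)\to H^1(K,\mathbb{Z}/4)$ is the surjection $K^\times/(K^\times)^8\to K^\times/(K^\times)^4$, so every $\mathbb{Z}/4$-torsor lifts and the boundary invariant vanishes identically. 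So you must actually establish non-negligibility for $2{\cdot}A_7$, and the natural way to do so is precisely the group-theoretic fact the paper verifies: the Klein four-subgroup $A=\langle(1\,2)(3\,4),(1\,2)(5\,6)\rangle$ has \emph{nonabelian} preimage $B$ in the lift to $\GL_4(\bbC)$ (checked on Blichfeldt's explicit matrices). Restricted to such an $A$, the invariant $\partial$ becomes, modulo terms negligible over $\bbC$-fields, the cup product $\chi_1\cup\chi_2$ of the two defining characters, which is nonzero on the torsor $(\sqrt{s},\sqrt{t})$ over $\bbC(s,t)$ (the quaternion algebra $(s,t)$); equivalently, as the paper argues directly, $B$ nonabelian means $A$ has no fixed point on $\bbP^3$. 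One further minor point: having dropped Serre's Theorem~\ref{thm:Serre} from your chain, your assumption that the compression target has dimension exactly $3$ needs a word --- either quote $\ed_\bbC(A_7)\ge\ed_\bbC(A_6)=3$ as the paper does, or pad a lower-dimensional target by a trivial factor using additional invariant rational functions.
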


The essential dimension of a finite group was introduced by Buhler and Reichstein in \cite{BuhRei1997EDFG}.  The concept has since been extended to much broader contexts (see \cite{Rei2000NEDAG} and \cite{BerFav2003EDFPVAAM}).

The following results hold when $k$ has ``sufficiently many roots of unity;'' for example, when $k$ is algebraically closed.  If $G$ is an abelian group then $\ed_k(G)=\rank(G)$ \cite[Theorem 6.1]{BuhRei1997EDFG}.  We have $\ed_k(G)=1$ if and only if $G$ is cyclic or odd dihedral \cite[Theorem 6.2]{BuhRei1997EDFG}; see also \cite{Led2007FGED} and \cite{ChuHuKanZha2008GWED}.  If $G$ is a $p$-group then $\ed_k(G)$ is equal to the minimal dimension of a faithful linear representation of $G$; this is a deep result of Karpenko and Merkurjev \cite{KarMer2008EDFP}.

The values of $\ed_k(S_n)$ are of special interest because they relate to classical questions of simplifying degree $n$ polynomials via Tschirnhaus transformations.  In particular, the degree $7$ case features prominently in algebraic variants of Hilbert's 13th problem.  In this language, several results for small $n$ were established by Hermite, Joubert and Klein in the 1800s. For more information, see the discussion in \cite{BuhRei1997EDFG} or \cite{BuhRei1999TT}.

The values of $\ed_k(S_n)$ and $\ed_k(A_n)$ are known for all $n \le 6$.  Buhler and Reichstein \cite{BuhRei1997EDFG} establish bounds for symmetric groups when $n \ge 5$:
\begin{equation}\label{eqn:origSnBounds}
n-3 \ge \ed_k(S_n) \ge \lfloor n/2 \rfloor \,.
\end{equation}
We note that these bounds tell us that $\ed_k(S_7)$ is either $3$ or $4$.  For the alternating groups $A_n$, they found the following bounds when $n \ge 5$:
\begin{equation}\label{eqn:origAnBounds}
n-3 \ge \ed_k(A_n) \ge 2\lfloor n/4 \rfloor \,.
\end{equation}
From this $\ed_k(A_6)$ is either $2$ or $3$.  Recently, Serre found the exact value:

\newcommand{\workaroundA}{Serre \cite[Proposition 3.6]{Ser2008GCSF}}
\begin{thm}[\workaroundA]\label{thm:Serre}
$\ed_k(A_6)=3$.
\end{thm}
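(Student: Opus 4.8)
The upper bound is immediate: specializing \eqref{eqn:origAnBounds} to $n=6$ gives $3 = n-3 \ge \ed_k(A_6) \ge 2\lfloor 6/4\rfloor = 2$, so it remains only to exclude the value $2$. The plan is to argue by contradiction. Assuming $\ed_k(A_6)=2$, I would produce a faithful action of $A_6$ on a smooth projective rational surface that is \emph{versal} (i.e.\ dominated by a faithful linear representation), and then show that no such surface can exist. The whole strategy is the two-dimensional analogue of the threefold argument this note makes for $A_7$ and $S_7$: replace Prokhorov's classification by the classification of group actions on rational surfaces.

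First I would unpack the hypothesis $\ed_k(A_6)=2$. By definition there is a faithful $A_6$-variety $X$ of dimension $2$ together with a compression $V \dashrightarrow X$ from a faithful linear representation $V$. Since $V$ is rational and the map is dominant, $X$ is unirational, hence rational by Castelnuovo's criterion (we work in characteristic $0$ and may pass to $\bar k$). Resolving singularities $A_6$-equivariantly and running the $A_6$-equivariant minimal model program, I would replace $X$ by a minimal rational $A_6$-surface on which $A_6$ acts biregularly and faithfully.

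Next I would invoke the classification of finite groups acting on minimal rational surfaces (Manin, Dolgachev--Iskovskikh): such a minimal model is a del Pezzo surface or a conic bundle, and for the simple group $A_6$ the only faithful action, up to $A_6$-equivariant birational equivalence, is the classical \emph{Valentiner action} on $\bbP^2$, that is, the embedding $A_6 \hookrightarrow \PGL_3$. The crucial point is that this action lifts to $\GL_3$ only after passing to the exceptional triple cover $3{\cdot}A_6 \subseteq \operatorname{SL}_3$; indeed the character degrees of $A_6$ are $1,5,5,8,8,9,10$, so $A_6$ has \emph{no} genuine three-dimensional linear representation, and the three-dimensional representation lives only on the triple cover. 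This reflects the $\mathbb{Z}/3$ summand of the Schur multiplier $H_2(A_6,\mathbb{Z})=\mathbb{Z}/6$, and it means that the Valentiner $A_6$-surface carries a nontrivial cohomological (gerbe) obstruction to being dominated by a linear $A_6$-representation.

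Finally I would convert this obstruction into the contradiction: a versal two-dimensional model, being dominated by a linear representation $V$, would trivialize the class obstructing the lift to $\GL_3$, which is impossible. (By contrast, for $A_5$ the analogous action on $\bbP^2$ comes from an \emph{honest} three-dimensional linear representation, which is exactly why $\ed_k(A_5)=2$; the jump to $3$ for $A_6$ is governed precisely by the $3$-part of the Schur multiplier.) The hard part will be the geometric input together with its interface to essential dimension: establishing that the Valentiner $\bbP^2$ is, up to $A_6$-equivariant birational equivalence, the \emph{only} faithful rational $A_6$-surface, and formulating the triple-cover obstruction to versality sharply enough to rule out every such two-dimensional model simultaneously. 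This last conversion is genuinely global — note that all the essential $p$-dimensions of $A_6$ are at most $2$, so the bound $\ed_k(A_6)\ge 3$ cannot be seen by any single prime and must come from the geometry.
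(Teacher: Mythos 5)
Your skeleton matches Serre's argument as the paper sketches it (the paper itself does not reprove Theorem \ref{thm:Serre}, but its introduction to the main proof describes exactly this route): the bounds (\ref{eqn:origAnBounds}) give $\ed_k(A_6)\in\{2,3\}$, equivariant resolution and the minimal model program plus the Manin/Iskovskikh classification reduce a hypothetical $2$-dimensional versal model to $\bbP^2$ with the Valentiner action, and the triple cover $3\cdot A_6\subset\operatorname{SL}_3(\bbC)$ is indeed the source of the obstruction. But your decisive step --- ``a versal model, being dominated by a linear representation, would trivialize the class obstructing the lift to $\GL_3$, which is impossible'' --- is not an argument; it is precisely the assertion that needs proof, and you concede as much (``the hard part will be \dots formulating the triple-cover obstruction to versality sharply enough''). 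Worse, as stated the claim is too strong: the existence of a compression $V\dasharrow X$ does not let you conclude that the extension class $\alpha\in H^2(A_6,\bbC^\times)$ is trivial, and in general there is no theorem saying a projective $G$-variety with a nontrivial lifting class cannot be versal. So contradicting the mere nontriviality of $\alpha$ is not available to you.

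The missing bridge is exactly the paper's Proposition \ref{prop:goingdown} (Reichstein--Youssin): since $V$ has a smooth $A_6$-fixed point (the origin) and the model $X$ is proper, \emph{every abelian} subgroup $A\subseteq A_6$ must have a fixed point on $X$. For $X=\bbP^2$ with the Valentiner action, a fixed point of $A$ forces the preimage $B$ of $A$ in $\GL_3(\bbC)$ to admit a character splitting the scalars, so $B\simeq A\times\bbC^\times$ is abelian --- exactly the mechanism of Case (ii) in the proof of Theorem \ref{thm:main}. Now take $A$ a Sylow $3$-subgroup $(\mathbb{Z}/3)^2$: since restriction to a Sylow $3$-subgroup is injective on the $3$-primary part of $H^2(A_6,\bbC^\times)=\mathbb{Z}/6$, the preimage of $A$ in $3\cdot A_6$ is the nonabelian Heisenberg group of order $27$, so $A$ has no fixed point on $\bbP^2$, giving the contradiction. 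In other words, versality does not kill the class $\alpha$ itself; it only kills the restrictions $\alpha|_A$ to abelian subgroups via the fixed-point criterion, and your proof must (i) import the going-down lemma and (ii) exhibit a specific abelian subgroup on which the restriction is nontrivial. Without these two concrete ingredients the proposal does not close.
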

  
Taking Theorems \ref{thm:main} and \ref{thm:Serre} into account we can improve some of the known bounds in higher dimensions.  From \cite[Theorem 6.5]{BuhRei1997EDFG}, we have that $\ed_k(S_{n+2}) \ge \ed_k(S_{n})+1$ for any $n \ge 1$.  Similarly, from \cite[Theorem 6.7]{BuhRei1997EDFG}, we have $\ed_k(A_{n+4}) \ge \ed_k(A_{n})+2$ for any $n \ge 4$.  We have the following for $n \ge 6$:
\begin{align}
n-3 \ge \ed_k(S_n) &\ge \left\lfloor \frac{n+1}{2} \right\rfloor \,, \\
n-3 \ge \ed_k(A_n) &\ge
\begin{cases}
\frac{n}{2} & \textrm{ for } n \textrm{ even}\\
\frac{n-1}{2} & \textrm{ for } n \equiv 1\bmod{4}\\
\frac{n+1}{2} & \textrm{ for } n \equiv 3\bmod{4}
\end{cases}\,.
\end{align}

\subsection*{Proof of the main theorem}

Our proof of Theorem \ref{thm:main} is in the same spirit as Serre's proof of Theorem \ref{thm:Serre}.  For Serre's argument, it suffices to show $\ed_k(A_6) \ne 2$ by the bounds in (\ref{eqn:origAnBounds}).  One must show no $A_6$-surface sits under a compression from a linear $A_6$-variety.  Serre uses the Enriques-Manin-Iskovskikh classification of minimal rational $G$-surfaces (see \cite{Man1967RSOPF} and \cite{Isk1979MMRSOAF}) to reduce the problem to one surface with an $A_6$-action ($\bbP^2$ with the linear action).  It is then shown that the group acting on this remaining surface has an abelian subgroup without fixed points.  This eliminates this last surface in view of the following:

\newcommand{\workaroundB}{\cite[Proposition A.2]{ReiYou2000EDAGRTG}}
\begin{prop}[\workaroundB]\label{prop:goingdown}
Let $A$ be a finite abelian group and $\psi : V \dasharrow X$ be an $A$-equivariant rational map of $A$-varieties over $\bbC$.  If $V$ has a smooth $A$-fixed point and $X$ is proper then $X$ has an $A$-fixed point.
\end{prop}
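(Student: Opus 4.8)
The plan is to reduce to the situation where $\psi$ is an honest morphism and then to carry the fixed point through the reduction. First I would replace $V$ by its smooth locus, an $A$-invariant open subvariety containing $v$, so that we may assume $V$ is smooth. Since we work over $\bbC$ and $A$ is finite, Hironaka's elimination of indeterminacy can be performed $A$-equivariantly: there is a proper birational morphism $\pi \colon \tilde{V} \to V$, a composite of blow-ups along smooth $A$-invariant centers with $\tilde{V}$ smooth, such that the composite rational map $\psi \circ \pi$ extends to an $A$-equivariant morphism $\tilde{\psi} \colon \tilde{V} \to X$. Properness of $X$ is precisely what guarantees that this extension lands in $X$. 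Granting this, it suffices to exhibit one $A$-fixed point $\tilde{v} \in \tilde{V}$, since then $\tilde{\psi}(\tilde{v})$ is an $A$-fixed point of $X$ by equivariance.

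The core of the argument is then a lifting lemma for a single blow-up. Let $W$ be smooth with an $A$-action, let $w \in W$ be a smooth $A$-fixed point, and let $W' \to W$ be the blow-up along a smooth $A$-invariant center $Z$. If $w \notin Z$, the blow-up is an isomorphism over a neighborhood of $w$ and $w$ lifts to a smooth $A$-fixed point of $W'$. If $w \in Z$, the fiber over $w$ is the projective space $\bbP(N_{Z/W,w})$ attached to the normal space, carrying the linear $A$-action induced from the action of $A$ on $T_w W$. Because $A$ is finite abelian and we work over $\bbC$, this representation is a sum of characters, so it has an $A$-eigenline, whose class is an $A$-fixed point of $\bbP(N_{Z/W,w})$; this point is smooth in $W'$ since blowing up a smooth variety along a smooth center preserves smoothness.

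Applying the lemma inductively up the tower of blow-ups comprising $\pi$, beginning from the given smooth $A$-fixed point $v$, yields a smooth $A$-fixed point $\tilde{v} \in \tilde{V}$ lying over $v$, and feeding it into $\tilde{\psi}$ completes the proof.

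The step I expect to be most delicate is the equivariant elimination of indeterminacy. One must arrange simultaneously that every blow-up center can be taken $A$-invariant, that $\tilde{V}$ remains smooth so that the normal-space description of the exceptional fibers stays valid and the induction can proceed, and that the extended map really factors through $X$ itself rather than merely through some compactification — this last requirement is exactly where properness of $X$ enters. By contrast, the fixed-point lifting is elementary, resting only on the fact that a finite abelian group acting linearly on a projective space fixes the image of any eigenvector.
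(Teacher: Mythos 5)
Your proof is essentially correct, but note that the paper does not prove this proposition at all: it cites it to Reichstein--Youssin, where the argument (due to Koll\'ar and Szab\'o, in the appendix) takes a genuinely lighter route than yours. Their proof is an induction on $\dim V$: blow up the smooth fixed point $v$ itself --- the center is a single fixed point, so equivariance is automatic and no resolution machinery is invoked. Since the source is smooth and $X$ is proper, the valuative criterion shows the lifted rational map is defined in codimension one, so the exceptional divisor $E \simeq \bbP(T_vV)$ is not contained in the indeterminacy locus; restricting gives an $A$-equivariant rational map $E \dasharrow X$ with $\dim E = \dim V - 1$, and $E$ has a smooth $A$-fixed point because the finite abelian $A$ acts diagonalizably on $T_vV$ --- exactly the eigenline fact your lifting lemma rests on. Induction terminates in dimension zero, where the map is a morphism. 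Your approach instead resolves the indeterminacy once and for all, which is fine but leans on a heavier and more delicate input, just as you suspected: plain Hironaka does not by itself give $A$-invariant centers, so you must cite canonical (functorial) resolution, whose functoriality under smooth morphisms yields equivariance under the finite group --- Reichstein--Youssin's own ``resolution theorem'' supplies precisely this --- and for $X$ proper but possibly non-projective one should resolve via the closure of the graph in $V \times X$ rather than by principalizing a pullback of an ample class. Granting that input, your lifting of the fixed point through each smooth invariant blow-up (eigenline in the normal space $T_wW/T_wZ$, smoothness preserved) is correct, and the induction up the tower goes through. The trade-off: your argument produces an honest morphism $\widetilde{\psi}$, which is conceptually clean, while the Koll\'ar--Szab\'o induction is self-contained and never needs to steer the fixed point around the indeterminacy locus, because the entire exceptional divisor survives into the locus of definition.
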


For our proof, will need to show that $\ed_\bbC(A_7) \ne 3$.  Serre looked at rational surfaces; we consider unirational threefolds.  Our analog of Serre's reduction to $\bbP^2$ is Prokhorov's classification for the group $A_7$:

\newcommand{\workaroundC}{Prokhorov \cite[Theorem 1.5]{Pro2009SFSCGR}}
\begin{thm}[\workaroundC]\label{thm:Prokhorov}
Let $X$ be a rationally connected threefold over $\bbC$ with a faithful action of $A_7$.  Then $X$ is equivariantly birationally equivalent to one of the following:
\begin{enumerate}
\item[(i)] A subvariety of $\bbP^6$, with the standard permutation $A_7$ action, cut out by symmetric polynomials of degrees $1$, $2$ and $3$.
\item[(ii)] $\bbP^3$ with a linear action of $A_7$.
\end{enumerate}
\end{thm}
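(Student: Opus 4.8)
The plan is to run the $A_7$-equivariant Minimal Model Program. After an equivariant resolution (characteristic $0$) we may assume $X$ is a smooth projective $A_7$-threefold, since we only classify up to equivariant birational equivalence. Running the $G$-MMP, which is available in dimension three by Mori's work together with its equivariant refinement (extremal contractions can be chosen $G$-equivariantly), terminates in a $G$-Mori fiber space $f \colon X' \to S$: here $X'$ has terminal $\bbQ$-factorial singularities, the group of $A_7$-invariant divisor classes on $X'$ has rank one relative to $S$, $-K_{X'}$ is $f$-ample, and $\dim S \in \{0,1,2\}$. As $X$ is rationally connected, so are $X'$ and $S$. The argument then splits according to $\dim S$: one must eliminate the positive-dimensional bases and classify the $A_7$-Fano threefolds arising when $\dim S = 0$.

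For the positive-dimensional bases I would use the simplicity of $A_7$ together with the scarcity of its low-dimensional homomorphic images. If $\dim S = 2$, then $f$ is a conic bundle and $A_7$ acts on the rational surface $S$; the kernel of $A_7 \to \operatorname{Aut}(S)$ is normal, hence trivial or all of $A_7$. A trivial action on $S$ forces $A_7 \hookrightarrow \PGL_2(\bbC(S))$ on the generic fiber, which is impossible because every finite subgroup of $\PGL_2$ over a field of characteristic $0$ is cyclic, dihedral, $A_4$, $S_4$ or $A_5$; a faithful action on $S$ embeds $A_7$ into the plane Cremona group $\operatorname{Cr}_2(\bbC)$, excluded by the Dolgachev--Iskovskikh classification, whose only simple members are $A_5$, $A_6$ and $\operatorname{PSL}_2(\mathbb{F}_7)$. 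If $\dim S = 1$, then $S = \bbP^1$ and the action on the base is again trivial by simplicity, so $A_7$ acts faithfully on the generic del Pezzo fiber, a geometrically rational surface over $\bbC(t)$; base-changing to the algebraic closure once more embeds $A_7$ into $\operatorname{Cr}_2$ over an algebraically closed field of characteristic $0$, a contradiction. Only $\dim S = 0$ survives.

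Thus $X'$ is a $G\bbQ$-factorial terminal Fano threefold of $A_7$-invariant Picard rank one, and the decisive tool becomes the representation theory of $A_7$ and its covers. The irreducible degrees of $A_7$ are $1,6,10,10,14,14,15,21,35$, so the smallest faithful \emph{linear} representation has dimension $6$ (the standard summand $W$ of the permutation module), while the smallest faithful \emph{projective} representation comes from the double cover $2.A_7 \hookrightarrow \operatorname{SL}_4(\bbC)$, giving $A_7 \hookrightarrow \PGL_4$; in particular $A_7$ embeds into none of $\PGL_2$, $\PGL_3$, $\PGL_5$. Since $X'$ is defined inside projective space by a multiple of its fundamental divisor and $A_7$ acts on the associated linear systems as projective representations (i.e. representations of a cover of $A_7$), matching their dimensions against this list forces the fundamental or anticanonical image to lie in $\bbP^3$ or in $\bbP^5 = \bbP(W)$. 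The first case produces $\bbP^3$ with its $\PGL_4$-action, namely (ii); in the second, $\dim|{-}K_{X'}| = 5$ forces genus $4$, so $X'$ is the intersection of an $A_7$-invariant quadric and cubic in $\bbP^5$, which is exactly (i) after adjoining the invariant hyperplane in $\bbP^6$.

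I expect the genuine difficulty to lie in the geometric heart of the Fano classification: before the representation-theoretic sieve applies, one must prove that $X'$ really is one of these bounded low-degree Fano threefolds and rule out the many sporadic terminal $\bbQ$-Fano families that might a priori admit a large symmetry group. This should rest on boundedness of terminal Fano threefolds combined with a fixed-point analysis of an element of order $7$: via Reid's orbifold Riemann--Roch the basket of terminal quotient singularities and the isolated fixed points of a $\mathbb{Z}/7$-action are tightly constrained (the prime $7$ is large relative to $\dim X' = 3$), bounding the anticanonical degree and Gorenstein index and pinning $X'$ to the short list above. Completing this singularity-and-degree bookkeeping, and then verifying that each surviving model is $A_7$-equivariantly birational to precisely one of the two stated varieties, is where the main work lies; the equivariant MMP reduction and the representation-theoretic constraints are by comparison formal.
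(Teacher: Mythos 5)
You should first be aware that the paper contains no proof of this statement: it is imported verbatim as Prokhorov's \cite[Theorem 1.5]{Pro2009SFSCGR}, so the only meaningful benchmark is Prokhorov's own argument. Your outline does in fact anticipate its broad architecture: equivariant resolution, the $G$-MMP down to a $G$-Mori fiber space with terminal $\bbQ$-factorial singularities, and the elimination of positive-dimensional bases. That elimination step in your write-up is correct and essentially complete: simplicity of $A_7$ forces the action on the base to be trivial or faithful, $A_7$ does not embed in $\PGL_2$ over any field of characteristic $0$ (the finite subgroups being cyclic, dihedral, $A_4$, $S_4$, $A_5$), and the Dolgachev--Iskovskikh classification excludes $A_7$ from $\operatorname{Cr}_2$ over an algebraically closed field of characteristic $0$ (one should say a word about why faithfulness survives passage to the geometric generic fiber and why the $\operatorname{Cr}_2$ classification transfers to $\overline{\bbC(t)}$, but both are standard).

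The genuine gap is the case $\dim S=0$, which is the entire content of the theorem, and your ``representation-theoretic sieve'' as stated does not close it. Knowing that $A_7$ embeds in $\PGL_4(\bbC)$ and $\PGL_6(\bbC)$ but not in $\PGL_2$, $\PGL_3$, $\PGL_5$ does not force the fundamental or anticanonical image into $\bbP^3$ or $\bbP^5$: the group $A_7$ also acts faithfully on $\bbP^6$ (permutation representation), $\bbP^9$, $\bbP^{13}$, etc., and the representation of a cover of $A_7$ on $H^0(-K_{X'})$ need not be irreducible, so a priori $\dim|{-}K_{X'}|$ could match many values. For instance, a genus-$5$ Fano $X_{2,2,2}\subset\bbP^6$ with the permutation action must be excluded by checking that $\operatorname{Sym}^2$ of the permutation representation contains only a $2$-dimensional space of invariant quadrics and hence no invariant net --- a verification your sketch neither performs nor even enumerates as necessary. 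You also assume without justification that $X'$ is Gorenstein and that $-K_{X'}$ is very ample; terminal $\bbQ$-Fano threefolds can have small or ill-behaved anticanonical systems, and excluding them requires precisely the orbifold Riemann--Roch and boundedness analysis (constraints imposed by the order-$7$ element on the basket of terminal singularities) that your final paragraph explicitly defers. Since you yourself concede this is ``where the main work lies'' without carrying it out, what you have is a correct strategic outline --- consonant with Prokhorov's actual proof --- but not a proof: the decisive classification of $A_7$-equivariant $\bbQ$-Fano threefolds is missing.
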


We will need the following lemma to reduce to the case where $k=\bbC$:

\begin{lem}\label{lem:Csuffices}
Suppose $G$ is a finite group and $k$ is a field of characteristic $0$.  Then $\ed_k(G) \ge \ed_\bbC(G)$.
\end{lem}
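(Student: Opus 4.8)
The plan is to deduce the lemma from two ingredients: the monotonicity of essential dimension under field extensions, and a descent (or ``spreading out'') of an optimal compression to a finitely generated subfield. The guiding observation is that $\ed_k(G)$ is witnessed by a compression involving only finitely many elements of $k$, so one may replace $k$ by a finitely generated---hence complex-embeddable---subfield without changing the relevant data.

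First I would record monotonicity: if $F \subseteq L$ are fields of characteristic $0$, then $\ed_L(G) \le \ed_F(G)$. Since $G$ is finite it has a faithful representation defined over $\bbQ$ (for instance the regular representation), and by \cite[Theorem 3.1]{BuhRei1997EDFG} we may use it to compute essential dimension over either field. Starting from an optimal compression $V_F \dasharrow X$ over $F$, base change along $F \subseteq L$ yields $V_L \dasharrow X_L$, and I would check that this is again a compression. Indeed, $X_L$ is irreducible because it receives a dominant map from the irreducible $V_L$; it is reduced because $L/F$ is separable in characteristic $0$; dominance and equivariance are stable under base change; and faithfulness descends from $X_L$ to $X$, so it cannot be lost in passing up. Since $\dim X_L = \dim X$, this gives $\ed_L(G) \le \ed_F(G)$, i.e. essential dimension is non-increasing as the base field grows.

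Next I would carry out the descent. Choose a compression $\psi : V_k \dasharrow X$ over $k$ with $\dim X = \ed_k(G)$. The variety $X$, the $G$-action on it, the equivariant rational map $\psi$, and the faithfulness of the action are all cut out by finitely many polynomial and rational expressions; let $k_0 \subseteq k$ be the subfield generated over $\bbQ$ by their (finitely many) coefficients. A standard spreading-out argument then produces a $G$-variety $X_0$ over $k_0$ with $X_0 \times_{k_0} k \cong X$ together with a compression $V_{k_0} \dasharrow X_0$ of the same dimension, the $G$-action and its faithfulness descending because these properties are detected after the faithfully flat base change to $k$. Hence $\ed_{k_0}(G) \le \dim X_0 = \ed_k(G)$.

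Finally, since $k_0$ is finitely generated over $\bbQ$ it has finite transcendence degree, so it embeds into $\bbC$: send a transcendence basis to algebraically independent complex numbers and extend over the algebraic part, using that $\bbC$ is algebraically closed of uncountable transcendence degree over $\bbQ$. Applying the monotonicity step to $k_0 \hookrightarrow \bbC$ gives $\ed_\bbC(G) \le \ed_{k_0}(G)$, and combining this with the descent inequality yields $\ed_\bbC(G) \le \ed_{k_0}(G) \le \ed_k(G)$, as required. I expect the main obstacle to be the descent step: verifying that all the structure of the compression---the variety, the action, the dominant equivariant map, and especially the faithfulness of the action---can be simultaneously defined over the finitely generated subfield $k_0$, and that base change back to $k$ recovers the original compression. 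By comparison, the monotonicity and the embedding of $k_0$ into $\bbC$ are routine.
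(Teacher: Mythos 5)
Your proposal is correct, but it takes a genuinely different route from the paper. The paper disposes of the lemma in two citations: first $\ed_k(G) \ge \ed_K(G)$ for $K$ an algebraic closure of $k$ (Berhuy--Favi), and then $\ed_K(G) = \ed_\bbC(G)$ by the Brosnan--Reichstein--Vistoli invariance of essential dimension under extensions of algebraically closed fields of characteristic $0$, applied via the common subfield $\overline{\bbQ}$. That second step is needed precisely because $\bar{k}$ may have too large a transcendence degree to embed into $\bbC$, so no direct comparison map exists --- and your argument neatly sidesteps this obstruction by going \emph{down} instead of up: you spread the optimal compression out to a finitely generated subfield $k_0 \subseteq k$, which \emph{does} embed into $\bbC$, and then apply only the easy monotonicity $\ed_L(G) \le \ed_F(G)$ for $F \subseteq L$. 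This replaces the BRV black box (whose hard direction, that essential dimension does not drop when an algebraically closed field is enlarged, requires a nontrivial specialization argument) with standard spreading-out for finite-type data, so your proof is more self-contained and elementary, at the cost of carrying out the limit argument that the paper outsources. All the verifications you flag go through: the target of a base-changed compression is irreducible as the closure of the image of the irreducible $V_L$, reduced by separability, dominance is preserved and detected after faithfully flat base change, and by Buhler--Reichstein's independence of $V$ you may take the regular representation over $\bbQ$ so that $V$ itself needs no descent. One small wording slip: in the monotonicity step you justify faithfulness of the action on $X_L$ by saying ``faithfulness descends from $X_L$ to $X$,'' which is the wrong direction; the fact you actually need is that \emph{triviality} descends --- if $g$ acts as the identity on $X_L$, then by faithfully flat descent of morphisms $g$ acts as the identity on $X$ --- whence faithfulness on $X$ forces faithfulness on $X_L$. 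The needed statement is true and standard, so this is a misattributed justification rather than a gap.
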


\begin{proof}
First, note that $\ed_k(G) \ge \ed_K(G)$ for $K$ an algebraic closure of $k$ (see \cite[Proposition 1.5]{BerFav2003EDFPVAAM}).  Next, we have $\ed_K(G) = \ed_\bbC(G)$ since $K$ and $\bbC$ both contain an algebraic closure of $\bbQ$ (see \cite[Proposition 2.14(1)]{BroReiVis2007EDAS}).
\end{proof}

\begin{proof}[Proof of Theorem \ref{thm:main}]
We have the following string of inequalities:
\[4 \ge \ed_k(S_7) \ge \ed_k(A_7) \ge \ed_\bbC(A_7) \ge \ed_\bbC(A_6) = 3\,.\]
Indeed, the first inequality follows from the bound \ref{eqn:origSnBounds}.  The second and fourth inequalities follow from the standard fact that $\ed_k(G) \ge \ed_k(H)$ for any subgroup $H$ of a finite group $G$.  The third inequality follows from Lemma \ref{lem:Csuffices}.  To prove the theorem it suffices to prove that $\ed_\bbC(A_7) \ne 3$.

Suppose $\ed_\bbC(A_7) = 3$.  Then there exists a dominant rational $A_7$-equivariant map $\psi : V \dasharrow X$ from a linear $A_7$-variety $V$ to a $3$-dimensional $A_7$-variety $X$.  From this, $X$ is unirational and, thus, rationally connected.  We may assume that $X$ is one of the threefolds from Prokhorov's Theorem.

Note that $V$ has an $A_7$-fixed point (the origin) and $X$ is proper.  Thus all abelian subgroups of $A_7$ have fixed points by Proposition \ref{prop:goingdown}.  For each threefold, we will exhibit an abelian subgroup of $A_7$ without fixed points on $X$.   This leads to a contradiction and, so, $\ed_\bbC(A_7) \ne 3$ as desired.

\smallskip\noindent
{\bf Case (i):}  Consider $A = \langle (1\ 2\ 3), (4\ 5\ 6) \rangle$, an abelian subgroup of $A_7$.  Let $\zeta$ be a third root of unity.  Consider the following points in $\bbP^6$:

\begin{center}
$(\lambda_1 : \lambda_1 : \lambda_1 : \lambda_2 : \lambda_2 : \lambda_2 : \lambda_3)$ \\
$(1 : \zeta : \zeta^2 : 0 : 0 : 0 : 0)$ \\
$(1 : \zeta^2 : \zeta : 0 : 0 : 0 : 0)$ \\
$(0 : 0 : 0 : 1 : \zeta : \zeta^2 : 0)$ \\
$(0 : 0 : 0 : 1 : \zeta^2 : \zeta : 0)$
\end{center}

where $\lambda_1,\lambda_2,\lambda_3 \in \bbC$ are not all $0$.  These correspond to the eigenspaces of a lift of $A$ acting on $\bbC^7$.  Thus these are all the $A$-fixed points on $\bbP^6$.

We claim that none of these points lie on $X$.  For points of the first form, there are only two solutions of $x_1 + \ldots + x_7 = 0$ and $x_1^2 + \ldots + x_7^2 = 0$:
\[ \lambda_1 = -1 \pm \sqrt{-7}, \quad \lambda_2 = -1 \mp \sqrt{-7}, \quad \lambda_3 = 6 \]
One then checks that $x_1^3 + \ldots + x_7^3 \ne 0$ for these two points and for the remaining points.  We have an abelian subgroup without fixed points --- a contradiction.

\smallskip\noindent
{\bf Case (ii):}  In this case $A_7$ acts linearly on $\bbP^3$ and can be viewed as a subgroup of $\PGL_4(\bbC)$.  Let
\[ A = \langle (1\ 2)(3\ 4), (1\ 2)(5\ 6) \rangle \]
be an abelian subgroup of $A_7$.  Let $B$ be the inverse image of $A$ in $\GL_4(\bbC)$.  We have the following exact sequence of groups:
\[ 1 \to \bbC^\times \to B \to A \to 1 \]
where $\bbC^\times$ is the set of scalar matrices in $\GL_4(\bbC)$.  Recall that $A$ has a fixed point on $\bbP^3$.  This is equivalent to saying that the action of $B$ (viewed as a $4$-dimensional linear representation) has a $1$-dimensional subrepresentation $\chi : B \to \bbC^\times$.  This gives us a splitting $B \simeq A \times \bbC^\times$.  In particular, $B$ is abelian.

There are two distinct projective representations of $A_7$ in $PGL_4(\bbC)$ \cite{ConCurNorPar1985AFG}, but they are complex conjugates so it suffices to look at one.  An explicit description of a preimage of $A_7$ in $\GL_4(\bbC)$ can be found in Blichfeldt \cite[pg. 142]{Bli1917FCG}.  Using a computer algebra package, one checks that elements in the preimages of $(1\ 2)(3\ 4)$ and $(1\ 2)(5\ 6)$ do not commute.  Thus $B$ is not abelian --- a contradiction.
\end{proof}

\subsection*{Acknowledgements}
The author would like to thank Y. Prokhorov for helpful correspondence and Z. Reichstein for extensive comments on earlier versions of this note.

\bibliographystyle{plain}
\bibliography{bibliography}

\end{document}